\newcommand{\norm}[1]{\lVert#1\rVert}
\newcommand{\inner}[1]{\langle#1\rangle}
\newcommand{\N}{\mathbb{N}}
\newcommand{\R}{\mathbb{R}}
\newcommand{\kk}{_{\smash{k_j}}}
\let\nb\nobreak
\newtheorem*{Farkas}{Farkas' lemma}
\newtheorem*{lemma}{Lemma}
\theoremstyle{definition}
\newtheorem{remark}{Remark}
\title[Closedness of Convex Cones and Farkas' Lemma]
	{A Short Simple Proof of Closedness of Convex Cones and Farkas' Lemma}
\author{Wouter Kager}
\address{Vrije Universiteit, Department of Mathematics, Amsterdam, the 
Netherlands}
\begin{document}

\maketitle

Farkas' lemma is a cornerstone of the theory of linear inequalities, and is 
often the starting point for proving the duality theorem of linear programming 
(see, e.g., \cite{AK2004} and~\cite{MG2007}*{Ch.~6}). Suppose $A$ is a real 
$m\times\nb n$ matrix with column vectors $a_1, \dots, a_n$, and $b$ is a 
vector in~$\R^m$. To avoid trivial cases, assume $A \neq\nb 0$ and (with no 
loss of generality) $a_1 \neq\nb 0$. Let~$K$ be the convex cone $\{ Ax\colon x 
\in\nb \R_+^n \}$, where $\R_+$ denotes the set of non-negative reals. Then

\begin{Farkas}
	Either $b$ lies in~$K$, or there exists a vector~$y$ in~$\R^m$ such that 
	$\inner{a_1,y} \geq\nb 0, \dots, \inner{a_n,y} \geq\nb 0$ and $\inner{b,y} 
	<\nb 0$ (but not both).
\end{Farkas}

In~\cite{Ko1998}, Komornik gave a simple proof of Farkas' lemma, and stated as 
a difficulty with ``intuitive'' proofs that \textit{``the proof of the 
closedness of~$K$ is not obvious''}. Broyden in fact calls this \emph{``the 
most difficult part of a geometric proof''}~\cite{Br1998}. But here we show 
that closedness of~$K$ follows simply from the Bolzano--Weierstrass theorem 
once we have introduced the notion of an \emph{optimal} vector in~$[0,1]^n$. 
Farkas' lemma follows from this by standard means.

To illustrate why it is not obvious that~$K$ is closed, let $C$ be the set of 
points~$(x,y)$ such that $-1 <\nb x <\nb 1$ and $y \leq \ln(1-x^2)$. Then $C$ 
is convex and closed in~$\R^2$, but the convex cone generated by~$C$, i.e., 
the set~$\{\lambda z\colon \lambda \in\nb \R_+, z \nb\in C\}$, is the open 
lower half-plane in~$\R^2$ plus the point~0, which is not closed. Also, the 
linear map $f\colon (x,y) \mapsto\nb x$ maps~$C$ to the open interval~$(-1, 
1)$. So it is not true that a set is closed simply because it is the convex 
cone generated by a closed set of vectors, or because it is the linear image 
of such a set. We need a more refined argument.

We call a vector~$u = (u^1,\dots,u^n)$ \emph{optimal} if $u$ lies 
in~$[0,1]^n$, has length~$\norm{u} =\nb 1$, and there is no vector~$z$ 
in~$\R_+^n$ which satisfies $Az =\nb Au$ and has fewer non-zero components 
than~$u$. Note that $Au =\nb 0$ is impossible if $u$ is optimal, but any 
vector $v =\nb Ax$ with $x \in\nb \R_+^n$ can be written in the form~$\lambda 
\, Au$ with $\lambda \in\nb \R_+$ and~$u$ optimal: we can take $\lambda =\nb 
0$, $u =\nb (1,0,\dots,0)$ if $v=\nb0$, and if $v\neq\nb 0$ we can first 
choose from the set~$\{z\in\nb \R_+^n \colon v=\nb Az\}$ a vector~$z$ with a 
minimal number of non-zero components, and then take $\lambda =\nb \norm{z}$, 
$u=\nb \norm{z}^{-1}z$. We use this to prove closedness of~$K$.\looseness=-1

\begin{lemma}
	Let $K$ be the convex cone defined above, and suppose $\{v_k\}$ is a 
	sequence of points in~$K$ that converges to some limit~$v$. Then $v$ lies 
	in~$K$.
\end{lemma}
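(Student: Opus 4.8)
The plan is to exploit the representation supplied by the preceding remark in order to replace each (unbounded) term of the sequence by a bounded \emph{optimal} direction, and then to invoke the Bolzano--Weierstrass theorem. Since each $v_k$ lies in $K$, I would write $v_k = \lambda_k \, A u_k$ with $\lambda_k \in \R_+$ and $u_k$ optimal. The vectors $u_k$ all lie in the compact set $[0,1]^n$, so some subsequence $u_{k_j}$ converges to a limit $u$; by continuity of the norm and of the linear map $x \mapsto Ax$, the limit satisfies $u \in [0,1]^n$, $\norm{u} = 1$, and $A u_{k_j} \to A u$.

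The crux --- and the step I expect to be the main obstacle --- is to show that $A u \neq 0$. Without this, the scalars $\lambda_{k_j}$ could run off to infinity while $A u_{k_j} \to 0$, letting the limit $v$ escape $K$; this is precisely the phenomenon illustrated by the example in the introduction. I would argue by contradiction: suppose $A u = 0$. Since $\norm{u} = 1$, the index set $T = \{ i : u^i > 0 \}$ is nonempty, and by convergence $u_{k_j}^i > 0$ for every $i \in T$ once $j$ is large. For such $j$, set $t^* = \min_{i \in T} \bigl( u_{k_j}^i / u^i \bigr) > 0$ and consider $z = u_{k_j} - t^* u$. Then $z \in \R_+^n$: the coordinates outside $T$ are unchanged and stay non-negative, while the coordinates in $T$ are reduced but remain non-negative, with at least one driven to zero at an index attaining the minimum. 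Because $A u = 0$, we have $A z = A u_{k_j}$, yet $z$ has strictly fewer non-zero components than $u_{k_j}$, contradicting the optimality of $u_{k_j}$. Hence $A u \neq 0$.

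With $A u \neq 0$ established, the remainder is routine. Taking norms in $v_{k_j} = \lambda_{k_j} A u_{k_j}$ gives $\lambda_{k_j} = \norm{v_{k_j}} / \norm{A u_{k_j}}$, which converges to $\norm{v} / \norm{A u} =: \lambda \in \R_+$ since $A u_{k_j} \to A u \neq 0$. Passing to the limit then yields $v = \lambda \, A u = A(\lambda u)$ with $\lambda u \in \R_+^n$, so $v \in K$, as desired. The degenerate possibility $v = 0$ requires no separate treatment, as it simply corresponds to $\lambda = 0$.
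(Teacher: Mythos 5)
Your proof is correct and follows essentially the same route as the paper's: the same decomposition $v_k=\lambda_k\,Au_k$ into optimal directions, the same Bolzano--Weierstrass extraction, and the same construction $z=u_{k_j}-t^*u$ contradicting optimality to force $Au\neq 0$. The only cosmetic difference is that you phrase the key step as a proof by contradiction, whereas the paper concludes $Az\neq Au_k$ and hence $Au\neq 0$ directly.
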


\begin{proof}
	Write $v_k = \lambda_k \, Au_k$ with $\lambda_k \in\nb \R_+$ and $u_k = 
	(u_k^1,\dots,u_k^n)$ an optimal vector. Then the sequence~$\{u_k\}$ has a 
	subsequence~$\{u\kk\}$ that converges to some limit~$u$ in~$[0,1]^n$ of 
	length~1. Let $I$ be the set of indices~$i$ for which $u^i >\nb 0$, and 
	choose $k$ in~$\{k_j\}$ so that $u_k^i >\nb 0$ for each~$i$ in~$I$. Define 
	$\mu := \min_{i\in I} \, (\, u_k^i /\nb u^i \,)$ and $z := u_k -\nb \mu 
	u$. Then $\min_{i\in I} \, (\, z^i /\nb u^i \,) =\nb 0$ and $z^i =\nb 
	u_k^i$ if $i\notin\nb I$, so $z$ lies in~$\R_+^n$ and has at least one 
	non-zero component less than~$u_k$. Since $u_k$ was optimal, this implies 
	$Az \neq\nb Au_k$, hence $Au \neq\nb 0$. But we know that $\norm{Au\kk} 
	\to\nb \norm{Au}$ and $\lambda\kk \, \norm{Au\kk} = \norm{v\kk} \to\nb 
	\norm{v}$. It follows that $\lambda\kk \to\nb \lambda := \norm{Au}^{-1} 
	\norm{v}$, hence $v =\nb \lambda \, Au$, which lies in~$K$.
\end{proof}

We do not claim complete originality of the ideas used in the proof, but are 
not aware of other publications that prove the lemma in exactly the same way. 
In particular, while this note was under review it was brought to the author's 
notice that Bonnans and Shapiro~\cite{BS2000}* {Prop.~2.41} use optimal 
vectors in a similar way, yet their proof is more involved than necessary and 
not as direct as the one presented here. Other closely related but distinct 
proofs can be found for instance in~\cites{Bl2011, daw2017} and~\cite{MG2007}* 
{pp.~96--97}. We close with a proof of Farkas' lemma:

\begin{proof}[Proof of Farkas' lemma]
	Suppose $b$ does not lie in~$K$. Set $\delta := \inf_{v\in K} \, \norm{v 
	-\nb b}$ and for each~$k$ in~$\N$, choose a point~$v_k$ in~$K$ such that 
	$\norm{v_k -\nb b} < \delta +\nb k^{-1}$. Then $\norm{v_k} \leq \norm{b} 
	+\nb \delta +\nb 1$, so the sequence~$\{v_k\}$ is bounded and hence has a 
	subsequence~$\{v\kk\}$ which converges to some~$v$. By the lemma, $v 
	\in\nb K$, hence $\delta = \norm{v -\nb b} >\nb 0$. Now write $y = v -\nb 
	b$, and let~$w$ be one of the vectors $a_1, \dots, a_n$ or~$-v$. Note that 
	then $\norm{y} =\nb \delta$, and $\norm{y +\nb \lambda w} \geq\nb \delta$ 
	for~$\lambda$ in~$(0,1)$ because $v +\nb \lambda w \in K$. 
	Therefore,\looseness=-1
	\[
		0
		\leq \lim_{\lambda\to 0^+} \frac{1}{2\lambda}
				\bigl(\, \norm{y+\lambda w}^2 - \norm{y}^2 \,\bigr)
		=	 \lim_{\lambda\to 0^+} \frac\lambda2 \, \norm{w}^2 + \inner{w,y}
		=	 \inner{w,y}.
	\]
	This gives $\inner{a_1,y} \geq\nb 0, \dots, \inner{a_n,y} \geq\nb 0$ and 
	$-\inner{b,y} = \inner{-v,y} +\nb \norm{y}^2 \geq\nb \delta^2$.

	If $b$ \emph{does} lie in~$K$, then $b =\nb Ax$ for some~$x$ in~$\R_+^n$. 
	It follows that for any~$y$ in~$\R^m$, $\inner{b,y} = \sum_{i=1}^n x^i \, 
	\inner{a_i,y}$. But then $\inner{b,y} <\nb 0$ implies $\inner{a_i,y} <\nb 
	0$ for some~$a_i$.
\end{proof}

\begin{remark}
	The point~$v$ is actually the unique point in~$K$ closest to~$b$. To see 
	this, recall that $y = v-\nb b$, let $w$ be in~$K$, and write $z = w-\nb 
	b$. Then the point $\tfrac12 \, (v+\nb w)$ lies in~$K$, so $\norm{y+\nb z} 
	= \norm{v+\nb w-\nb 2b} \geq\nb 2\delta$. Hence, $v\neq\nb w$ implies
	\[
		2\,\norm{z}^2
		=    \norm{y-z}^2 + \norm{y+z}^2 - 2\,\norm{y}^2
		\geq \norm{v-w}^2 + 2\delta^2
		>    2\delta^2.
	\]
\end{remark}

\begin{remark}
	Let $c$ be the infimum of~$\norm{Au}$ over all optimal vectors~$u$, and 
	choose a sequence~$\{u_k\}$ of optimal vectors such that $\norm{Au_k} 
	\to\nb c$. Then the argument from the proof of our lemma yields $c >\nb 
	0$, which is a key step in Bonnans and Shapiro.
\end{remark}

\paragraph{\bfseries Acknowledgments.}
The author would like to thank Gerd Wachsmuth for calling his attention to 
reference~\cite{BS2000}, and three anonymous reviewers for their constructive 
suggestions for improving the presentation.

\begin{bibdiv}
\begin{biblist}

\bib{AK2004}{article}{
   author={Avis, David},
   author={Kaluzny, Bohdan},
   title={Solving inequalities and proving Farkas's lemma made easy},
   journal={Amer. Math. Monthly},
   volume={111},
   date={2004},
   number={2},
   pages={152--157},
   issn={0002-9890},
   review={\MR{2042763}},
   doi={10.2307/4145216},
}

\bib{Bl2011}{misc}{
   author={Christian Blatter},
   title={Topology: Proof that a finitely generated cone is closed},
   note={https://math.stackexchange.com/q/44281 (version: 2011-06-09)},
   eprint={https://math.stackexchange.com/q/44281},
   organization={Mathematics Stack Exchange},
}

\bib{BS2000}{book}{
   author={Bonnans, J. Fr\'{e}d\'{e}ric},
   author={Shapiro, Alexander},
   title={Perturbation analysis of optimization problems},
   series={Springer Series in Operations Research},
   publisher={Springer-Verlag, New York},
   date={2000},
   pages={xviii+601},
   isbn={0-387-98705-3},
   review={\MR{1756264}},
   doi={10.1007/978-1-4612-1394-9},
}

\bib{Br1998}{article}{
   author={Broyden, C. G.},
   title={A simple algebraic proof of Farkas's lemma and related theorems},
   journal={Optim. Methods Softw.},
   volume={8},
   date={1998},
   number={3-4},
   pages={185--199},
   issn={1055-6788},
   review={\MR{1623253}},
   doi={10.1080/10556789808805676},
}

\bib{daw2017}{misc}{
   author={daw (Username)},
   title={Linear transformation maps the first quadrant to a closed set},
   note={https://math.stackexchange.com/q/2189360 (version: 2017-03-16)},
   eprint={https://math.stackexchange.com/q/2189360},
   organization={Mathematics Stack Exchange},
}

\bib{Ko1998}{article}{
   author={Komornik, Vilmos},
   title={A simple proof of Farkas' lemma},
   journal={Amer. Math. Monthly},
   volume={105},
   date={1998},
   number={10},
   pages={949--950},
   issn={0002-9890},
   review={\MR{1656951}},
   doi={10.2307/2589288},
}

\bib{MG2007}{book}{
   author={Matou\v{s}ek, Ji\v{r}\'{i}},
   author={G\"{a}rtner, Bernd},
   title={Understanding and using linear programming},
   publisher={Springer Universitext, Berlin},
   date={2007},
   pages={viii+222},
   isbn={978-3-540-30717-4},
   doi={10.1007/978-3-540-30717-4},
}

\end{biblist}
\end{bibdiv}

\end{document}